\author{Guido Boccali}{Università di Torino, Torino, Italy}{guidoboccali@gmail.com}{}{}
\author{Andrea Laretto}{Tallinn University of Technology, Tallinn, Estonia}{anlare@ttu.ee}{https://orcid.org/0000-0002-1825-0097}{}
\author{Fosco Loregian}{Tallinn University of Technology, Tallinn, Estonia}{folore@ttu.ee}{https://orcid.org/0000-0003-3052-465F}{}
\author{Stefano Luneia}{Università di Bologna, Bologna, Italy}{stefano.luneia@gmail.com}{}{}
\authorrunning{Boccali, Laretto, Loregian, Luneia}
\keywords{Deterministic automata, Moore machines, Mealy machines, coalgebras, cocomplete category.}
\title{Completeness for categories of generalized automata}
\date{\today}
\newcommand{\torpdf}[1]{\texorpdfstring{#1}{}}
\def\baseRepo{https://github.com/iwilare/categorical-automata/blob/main/}
\newcommand{\Href}[2]{\href{#1}{\textsf{#2}}}
\newcommand{\agdasymbol}[1]{%
	\hspace{0.07em}%
	\Href{\baseRepo #1.agda}{{\upshape(\raisebox{-0.2em}{%
			\includegraphics[height=1em]{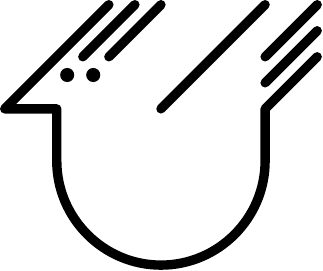}%
		})}%
		\hspace{-0.40em}%
  }
}
\newcommand{\agdasymbolraw}[1]{%
	\hspace{0.07em}%
	\Href{\baseRepo #1}{{\upshape(\raisebox{-0.2em}{%
			\includegraphics[height=1em]{agda}%
		})}%
		\hspace{-0.40em}%
  }
}
\newcommand{\specialagdasymbol}{%
	\hspace{0.07em}%
	\Href{https://github.com/iwilare/categorical-automata}{{\upshape(\raisebox{-0.2em}{%
			\includegraphics[height=1em]{agda}%
		})}%
		\hspace{-0.40em}%
  }
}
\NewDocumentCommand{\vxy}{o m}{
	\IfNoValueTF{#1}
	{\vcenter{\xymatrix{#2}}}
	{\vcenter{\xymatrix#1{#2}}}
}
\NewDocumentCommand{\mealy}{m m m O{I} O{O}}{\vxy{#1 & \ar[l]_-{#2} #1\otimes #4 \ar[r]^-{#3} & #5}}
\NewDocumentCommand{\moore}{m m m O{I} O{O}}{\vxy{#1 & \ar[l]_-{#2} #1\otimes #4 \:\: ; \:\: #1 \ar[r]^-{#3} & #5}}
\def\Mly{\cate{Mly}}
\def\mly{\cate{Mly}}
\def\Mre{\cate{Mre}}
\let\mre\Mre
\tikzset{
	shift left/.style={decorate,decoration={simple line,raise=#1}},
	shift right/.style={decorate,decoration={simple line,raise=-1*#1}},
}
\def\concat{\mathbin{\scalebox{.8}{$+\kern-.2em+$}}}
\def\uno{$\left\{\vcenter{\xymatrix@R=6mm@C=4mm{
	A\ar@{.>}[d]_\varphi & FA\ar[l]_a \ar@{.>}[d]^{F\varphi} \\
	E & FE\ar[l]_d
	}\xymatrix@R=6mm@C=4mm{
	A\ar@{.>}[d]_\varphi\ar[r]^-u & O_\infty \ar[r]^{s_T} & O \ar@{=}[d]\\
	E \ar[ur]_{u_E}\ar[rr]_s && O
	}}\right\}$}
\def\tre{$\left\{\vcenter{\xymatrix@R=6mm@C=6mm{
	& FA \ar@{->}[d]_{a} \ar@{.>}[r]^{F\varphi} \ar@{->}[ld]_{Fu} & FE \ar@{->}[d]^{d} \\
	FO_\infty \ar@{->}[rd]_{d_T} & A \ar@{.>}[r]_{\varphi} \ar@{->}[d]_{u} & E \ar@{->}[ld] \ar@{->}[d]^{s} \\
	& O_\infty \ar@{->}[r]_{s_T} & O
	}}\right\}$}
\definecolor{refkey}{HTML}{ADD8E6}
\definecolor{labelkey}{HTML}{ADD8E6}
\numberwithin{theorem}{section}
\numberwithin{definition}{section}
\numberwithin{proposition}{section}
\numberwithin{remark}{section}
\numberwithin{example}{section}
\numberwithin{lemma}{section}
\begin{document}
\maketitle

\def\xmly{F\emdash\mly} 
\def\xmoo{F\emdash\mre} 
\NewDocumentCommand{\xmach}{m O{O} m m}{\vxy{#1 & \ar[l]_-{#4} F #1 \ar[r]^-{#3} & #2}} 

\begin{abstract}
We present a slick proof of completeness and cocompleteness for categories of $F$-\emph{automata}, where the span of maps $E\leftarrow E\otimes I \to O$ that usually defines a deterministic automaton of input $I$ and output $O$ in a monoidal category $(\clK,\otimes)$ is replaced by a span $E\leftarrow F E \to O$ for a generic endofunctor $F : \clK\to \clK$ of a generic category $\clK$: these automata exist in their `Mealy' and `Moore' version and form categories $\xmly$ and $\xmoo$; such categories can be presented as strict 2-pullbacks in $\Cat$ and whenever $F$ is a left adjoint, both $\xmly$ and $\xmoo$ admit all limits and colimits that $\clK$ admits.
We mechanize our main results using the proof assistant Agda and the library \href{https://github.com/agda/agda-categories}{\texttt{agda-categories}}.
\end{abstract}
\section{Introduction}\label{sec:intro}

%


One of the most direct representations of \emph{deterministic automata} in the categorical settings consists (cf. \cite{adam-trnk:automata,2009,Ehrig}) of a span of morphisms $E\leftarrow E\times I \to O$, where the left leg provides a notion of \emph{next states} of the automaton given a current state $E$ and an input $I$, and the right leg provides an output $O$ given the same data. According to whether the output morphism depends on both the current state and an input or just on the state, one can then talk about classes of \emph{Mealy} and \emph{Moore automata}, respectively. This perspective of `automata in a category' naturally captures the idea that morphisms of a category can be interpreted as a general abstraction of processes/sequential operations.

The above notion of deterministic automata carries over to any monoidal category, on which the various classical notions of automata, e.g., minimization, bisimulation, powerset construction, can be equivalently reconstructed and studied as in the monograph \cite{Ehrig}.

In \cite{adam-trnk:automata, Guitart1980}, automata are generalized to the case in which, instead of taking spans from the monoidal product of states and inputs $E \otimes I$, one considers spans $E\leftarrow F E \to O$ for a generic endofunctor $F : \clK\to \clK$, providing an abstraction for the ambient structure that allows the automata to advance to the `next' state and give an output.



A general theorem asserting that the category of Mealy and Moore automata $\Mly_\clK(I,O)$, $\Mre_\clK(I,O)$ in a monoidal category $(\clK,\otimes)$ are complete and cocomplete whenever $\clK$ is itself complete and cocomplete can be obtained with little effort, cf. \cite[Ch. 11]{Ehrig}, but the proof given therein is a bit ad-hoc, and provides no intuition for why finite products and terminal objects tend to be so complicated.

With just a little bit more category-theoretic technology, some general considerations can be made about the shape of limits in such settings: colimits and connected limits can be computed as they are computed in $\clK$ (as a consequence of the fact that a certain functor \emph{creates} them, cf. \cite{McL}), whereas products (and in particular the empty product, the terminal object) have dramatically different shapes than those provided in $\clK$. The profound reason why this is the case comes from the fact that such a terminal object (which we refer to $O_\infty$) coincides with the terminal coalgebra of a specific endofunctor, which, depending on the case of Moore and Mealy automata, is given by $A \mapsto O \times R A$ and $A \mapsto R O \times R A$ respectively. The complicated shape of the terminal object $O_\infty$ in $\Mly_\clK(I,O)$ is then explained by Adamek's theorem, which presents the terminal object $O_\infty$ as inverse limit in $\clK$.

In this paper, we show that under the same assumption of completeness of the underlying category $K$, the completeness of $F$-automata can be obtained by requiring that the endofunctor $F$ admits a right adjoint $R$. The proof we provide follows a slick argument proving the existence of (co)limits by fitting each $\Mly_\clK(I,O)$ and $\Mre_\clK(I,O)$ into a strict 2-pullback in $\Cat$, and deriving the result from stability properties of limit-creating functors.

\medskip
\textbf{Outline of the paper.}
The present short note develops as follows:
\begin{itemize}
\item first (\autoref{sec:automata}) we introduce the language we will employ and the structures we will study:\footnote{An almost identical introductory short section appears in \cite{noi:bicategories}, of which the present note is a parallel submission --although related, the two manuscripts are essentially independent, and the purpose of this repetition is the desire for self-containment.} categories of automata valued in a monoidal category $(\clK,\otimes)$ (in two flavours: `Mealy' machines, where one considers spans $E\leftarrow E\otimes I\to O$, and `Moore', where instead one consider pairs $E\leftarrow E\otimes I,E\to O$) and of $F$-automata, where $F$ is an endofunctor of $\clK$ (possibly with no monoidal structure). `Mealy' automata are known as `deterministic automata' in today's parlance, but since we need to distinguish between the two kinds of diagram from time to time, we stick to an older terminology.
	\item Then (\autoref{limcolim}), to establish the presence of co/limits of shape $\clJ$ in categories of $F$-automata, under the two assumptions that $F:\clK\to\clK$ is a left adjoint in an adjunction $F\adjunct{\epsilon}{\eta} R$, and that co/limits of shape $\clJ$ exist in the base category $\clK$.
	\item Last (\autoref{roba_naude}), to address the generalisation to $F$-machines of the `behaviour as an adjunction' perspective expounded in \cite{2d281e1e3e0b525128f55519cf8a03fa52ce6252,NAUDE1979277}.
\end{itemize}
Similarly to the situation for Mealy/Moore machines, where $F=\firstblank\otimes I$, \emph{discrete} limits if $\xmly$ and $\xmoo$ exist but tend to have a shape that is dramatically different than the one in $\clK$.

A number of examples of endofunctors $F$ that satisfy the previous assumption come from considering $F$ as the (underlying endofunctor of the) comonad $LG$ of an adjunction $L\dashv G\dashv U$, since in that case $LG\dashv UG$: the shape-flat and flat-sharp adjunctions of a cohesive topos \cite{lawvere1986categories,lawvere2007axiomatic}, or the base-change adjunction $\Lan_f\dashv f^*\dashv \Ran_f$ for a morphism of rings, or more generally, $G$-modules in representation theory, any essential geometric morphism, or any topological functor $V : \clE \to \clB$ \cite[Prop. 7.3.7]{Bor2} with its fully faithful left and right adjoints $L\dashv V\dashv R$ gives rise to a comodality $LV$, left adjoint to a modality $RV$.

The results we get are not particularly surprising; we have not, however, been able to trace a reference addressing the co/completeness properties of $\xmly,\xmoo$ nor an analogue for the `behaviour as an adjunction' theorems expounded in \cite{2d281e1e3e0b525128f55519cf8a03fa52ce6252,NAUDE1979277}; in the case $F=\firstblank\otimes I$ co/completeness results follows from unwieldy ad-hoc arguments (cf. \cite[Ch. 11]{Ehrig}), whereas in \autoref{limcolim} we provide a clean, synthetic way to derive both results from general principles, starting by describing $\xmly$ and $\xmoo$ as suitable pullbacks in $\Cat$, in \autoref{machines_are_pb}.

We provide a mechanisation of our main results using the proof assistant Agda and the library \href{https://github.com/agda/agda-categories}{\texttt{agda-categories}}: we will add a small Agda logo \specialagdasymbol\ next to the beginning of a definition or statement whenever it is accompanied by Agda code, pointing directly to the formalisation files. The full development is available at \url{https://github.com/iwilare/categorical-automata}.

\section{Automata and \torpdf{$F$}-automata}\label{sec:automata}
The only purpose of this short section is to fix notation; classical comprehensive references for this material are \cite{adam-trnk:automata,Ehrig}; in particular, \cite[Ch. III]{adam-trnk:automata} is entirely devoted to the study of what here are called $F$-Moore automata, possibly equipped with an `initialization' morphism.  
\subsection{Mealy and Moore automata}
For the entire subsection, we fix a monoidal category $(\clK,\otimes,1)$.
\begin{definition}[Mealy machine]\agdasymbol{Mealy}\label{mmach_1cells}
	A \emph{Mealy machine} in $\clK$ of input object $I$ and output object $O$ consists of a triple $(E,d,s)$ where $E$ is an object of $\clK$ and $d,s$ are morphisms in a span
	\[\fke := \left(\mealy{E}{d}{s}\right)\label{mmach_eq}\]
\end{definition}
\begin{remark}[The category of Mealy machines]\label{mmach_2cells}
	Mealy machines of fixed input and output $I,O$ form a category, if we define a \emph{morphism of Mealy machines} $f : (E,d,s)\to (T, d', s')$ as a morphism $f : E\to T$ in $\clK$ such that
	\[\vxy{
		E\ar[d]_f & \ar[l]_d E\otimes I \ar[d]^{f\otimes I}\ar[r]^s& O\ar@{=}[d]\\
		T & \ar[l]_-{d'} T\otimes I \ar[r]_{s'}& O
		}
	\]
	Clearly, composition and identities are performed in $\clK$.
\end{remark}
The category of Mealy machines of input and output $I,O$ is denoted as $\Mly_\clK(I,O)$.
\begin{definition}[Moore machine]\agdasymbol{Moore}\label{moore_1cells}
	A \emph{Moore machine} in $\clK$ of input object $I$ and output object $O$ is a diagram
	\[\fkm := \left(\moore{E}{d}{s}\right)\label{momach_eq}\]
\end{definition}
\begin{remark}[The category of Moore machines]\label{moore_2cells}
	Moore machines of fixed input and output $I,O$ form a category, if we define a \emph{morphism of Moore machines} $f : (E,d,s)\to (T, d', s')$ as a morphism $f : E\to T$ in $\clK$ such that
	\[\vxy{
		E\ar[d]_f & \ar[l]_d E\otimes I \ar[d]^{f\otimes I}&E\ar[d]_f \ar[r]^s& O\ar@{=}[d]\\
		T & \ar[l]_-{d'} T\otimes I &T\ar[r]_{s'}& O
		}
	\]
\end{remark}
\subsection{$F$-Mealy and $F$-Moore automata}
The notion of \emph{$F$-machine} arises by replacing the tensor $E\otimes I$ in \eqref{mmach_eq} with the action $FE$ of a generic endofunctor $F:\clK\to\clK$ on an object $E\in\clK$, in such a way that a Mealy/Moore machine is just a $(\firstblank\otimes I)$-Mealy/Moore machine; cf. \cite[ff. 2.1.3°]{Guitart1980}, or Chapter III of the monograph \cite{adam-trnk:automata}. This natural idea acts as an abstraction for the structure that allows the machine to advance to the `next' state and give an output, and it leads to the following two definitions (where we do \emph{not} require $\clK$ to be monoidal).
\begin{definition}[$F$-Mealy machine]\agdasymbol{FMealy}\label{xmealy}
	Let $O\in\clK$ be a fixed object. The objects of the category $\xmly_{/O}$ (or simply $\xmly$ when the object $O$ is implicitly clear) of \emph{$F$-Mealy machines of output $O$} are the triples $(E,d,s)$ where $E\in\clK$ is an object and $s,d$ are morphisms in $\clK$ that fit in the span
	\[\xmach{E}{s}{d}\]
	A \emph{morphism} of $F$-Mealy machines $f : (E,d,s)\to (T,d',s')$ consists of a morphism $f : E\to T$ in $\clK$ such that
	\[\vxy{
		E\ar[d]_f & \ar[l]_d FE \ar[d]^{Ff}\ar[r]^s& O\ar@{=}[d]\\
		T & \ar[l]_-{d'} FT \ar[r]_{s'}& O
		}
	\]
\end{definition}
Unsurprisingly, we can generalise in the same fashion \autoref{moore_1cells} to the case of a generic endofunctor $F:\clK\to\clK$.
\begin{definition}[$F$-Moore machine]\agdasymbol{FMoore}\label{xmoore}
	Let $O\in\clK$ be a fixed object. The objects of the category $\xmoo_{/O}$ of \emph{$F$-Moore machines of output $O$} are the triples $(E,d,s)$ where $E\in\clK$ is an object and $s,d$ are a pair of morphisms in $\clK$
	\[\vxy{E & \ar[l]_-{d} FE \:\: ; \:\: E \ar[r]^-{s} & O}\]
	A \emph{morphism} of $F$-Moore machines $f : (E,d,s)\to (T,d',s')$ consists of a morphism $f : E\to T$ in $\clK$ such that
	\[\vxy{
		E\ar[d]_f & \ar[l]_d FE \ar[d]^{Ff}&E\ar[d]_f \ar[r]^s& O\ar@{=}[d]\\
		T & \ar[l]_-{d'} FT &T\ar[r]_{s'}& O
		}
	\]
\end{definition}


\begin{remark}[Interdefinability of notions of machine]
	All the concepts of machine introduced so far are interdefinable, provided we allow the monoidal base $\clK$ to change (cf. \cite[ff. Proposition 30]{Guitart1980}): a Mealy machine is, obviously, an $F$-machine where $F: \clK\to \clK$ is the functor $\firstblank\otimes I : E\mapsto E\otimes I$; an $F$-machine consists of a Mealy machine in a category of endofunctors: in fact, $F$-machines are precisely the Mealy machines of the form $E \leftarrow F\circ E \to O$, where $E,O$ are constant endofunctors on objects of $\clK$ and $F$ is the input object: more precisely, the category of $F$-machines is contained in the category $\Mly_{([\clK,\clK],\circ)}(F,c_O)$, where $c_O$ is the constant functor on $O\in\clK$, as the subcategory of those triples $(E,d,s)$ where $E$ is a constant endofunctor. 
\end{remark}
\section{Completeness and behaviour in \torpdf{$\xmly$} and \torpdf{$\xmoo$} }\label{sec:completeness}
The first result that we want to generalise to $F$-machines is the well-known fact that, considering for example Mealy machines, if $(\clK,\otimes)$ has countable coproducts preserved by each $I\otimes \firstblank$, then the span \eqref{mmach_eq} can be `extended' to a span
\[\mealy{E}{d^+}{s^+}[I^+][O]\label{mly_extension}\]
where $d^+,s^+$ can be defined inductively from components $d_n,s_n : E\otimes I^{\otimes n} \to E,O$.

Under the same assumptions, each Moore machine \eqref{momach_eq} can be `extended' to a span
\[\mealy{E}{d^*}{s^*}[I^*][O]\label{mre_extension}\]
where $d^*,s^*$ can be defined inductively from components $d_n,s_n : E\otimes I^{\otimes n} \to E,O$.\footnote{Assuming countable coproducts in $\clK$, the free \emph{monoid} $I^*$ on $I$ is the object $\sum_{n\ge 0} I^n$; the free \emph{semigroup} $I^+$ on $I$ is the object $\sum_{\ge 1} I^n$; clearly, if $1$ is the monoidal unit of $\otimes$, $I^*\cong 1+I^+$, and the two objects satisfy `recurrence equations' $I^+\cong I\otimes I^+$ and $I^*\cong 1+I\otimes I^*$.}
\begin{remark}
    In the case of Mealy machines, the object $I^+$ corresponds to the \emph{free semigroup} on the input object $I$, whereas for Moore machines one needs to consider the \emph{free monoid} $I^*$: this mirrors the intuition that in the latter case an output can be provided without any previous input. Note that the extension of a Moore machine gives rise to a span of morphisms from the same object $E \otimes I^*$, i.e., a Mealy machine that accepts the empty string as input.
\end{remark}

A similar construction can be carried over in the category of $F$-Mealy machines, using the $F$-algebra map $d : FE\to E$ to generate iterates $E\xot{d_n} F^nE\xto{s_n} O$:

From now on, let $F$ be an endofunctor of a category $\clK$ that has a right adjoint $R$. Examples of such arise naturally from the situation where a triple of adjoints $L\dashv G\dashv R$ is given, since we obtain adjunctions $LG\dashv RG$ and $GL\dashv GR$:
\begin{itemize}
\item every homomorphism of rings $f : A\to B$ induces a triple of adjoint functor between the categories of $A$ and $B$-modules;
\item similarly, every homomorphism of monoids $f : M\to N$ induces a `base change' functor $f^* : N\emdash\Set\to M\emdash\Set$;
\item every essential geometric morphism between topoi $\clE\leftrightarrows\clF$, i.e. every triple of adjoints $f_!\dashv f^*\dashv f_*$;
\item every topological functor $V : \clE \to \clB$ \cite[Prop. 7.3.7]{Bor2} with its fully faithful left and right adjoints $L\dashv V\dashv R$ (this gives rise to a comodality $LV$, left adjoint to a modality $RV$).
\end{itemize}
\begin{construction}[Dynamics of an $F$-machine]\label{nth_skip}\agdasymbolraw{FMoore/Limits.agda\#L44} 
	For any given $F$-Mealy machine
	\[\xmach{E}{s}{d}\]
	we define the family of morphisms $s_n : F^nE\to O$ inductively, as the composites
	\[\label{sheeft}
		\begin{cases}
			s_1 & = FE \xto{s} O                                                                \\
			s_2 & = FFE \xto{Fd} FE \xto s O                                                    \\
			s_n & = F^n E \xto{F^{n-1} d} F^{n-1}E \to \cdots \xto{FFd} FFE\xto{Fd} FE \xto s O
		\end{cases}
	\]
	Under our assumption that $F$ has a right adjoint $R$, this is equivalent to the datum of their mates $\bar s_n : E\to R^n O$ for $n\ge 1$ 
	under the adjunction $F^n\adjunct{}{\eta_n} R^n$ obtained by composition, iterating the structure in $F\adjunct{\epsilon}{\eta} R$.

	Such a $s_n$ is called the $n$th \emph{skip map}. Observe that the datum of the family of all $n$th skip maps $(s_n\mid n\in\bbN_{\ge 1})$ is obviously equivalent to a single map of type $\bar s_\infty : E \to \prod_{n\ge 1} R^nO$.
\end{construction}
\begin{remark}
	Reasoning in a similar fashion, one can define extensions $s : E\to O$, $s\circ d : FE \to E\to O$, $s\circ d\circ Fd : FFE\to O$, etc.\@ for an $F$-Moore machine.
\end{remark}
This is the first step towards the following statement, which will be substantiated and expanded in \autoref{limcolim} below:
\begin{claim}\label{the_terminal}
	The category $\xmoo$ of \autoref{xmoore} has a terminal object $\fko=(O_\infty,d_\infty,s_\infty)$ with carrier $O_\infty = \prod_{n\ge 0} R^nO$; similarly, the category $\xmly$ has a terminal object with carrier $O_\infty = \prod_{n\ge 1} R^nO$. (Note the shift in the index of the product, motivated by the fact that the skip maps for a Moore machine are indexed on $\bbN_{\ge 0}$, and on $\bbN_{\ge 1}$ for Mealy.)
\end{claim}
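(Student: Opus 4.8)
I would prove this by explicitly constructing the candidate terminal object and then verifying the universal property directly, treating the $F$-Moore case first and then noting the index shift for Mealy. The key input is \autoref{nth_skip}: for any $F$-Moore machine $(E,d,s)$, the family of iterated skip maps assembles — via the adjunction $F^n \dashv R^n$ — into a single morphism $\bar s_\infty : E \to \prod_{n\ge 0} R^n O$. The claim is that $O_\infty := \prod_{n\ge 0} R^n O$ carries a canonical $F$-Moore structure whose universal map out of $(E,d,s)$ is precisely this $\bar s_\infty$.

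\textbf{Constructing the structure on $O_\infty$.} First I would equip $O_\infty = \prod_{n\ge 0} R^n O$ with its output map $s_\infty : O_\infty \to O$ by projecting onto the $n=0$ factor $R^0 O = O$. The dynamics $d_\infty : F O_\infty \to O_\infty$ is the subtle part: I would define it by giving, for each $n\ge 0$, its $n$-th component $F O_\infty \to R^n O$. Using the adjunction, a map $F O_\infty \to R^n O$ is the same as a map $O_\infty \to R^{n+1} O$, and for this I take the $(n+1)$-st projection $\pi_{n+1} : O_\infty \to R^{n+1} O$. In other words, $d_\infty$ is the mate of the "shift" map $O_\infty \to \prod_{n\ge 0} R^{n+1} O \cong \prod_{n\ge 1} R^n O$ that forgets the zeroth coordinate. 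This shift-by-one is the structural heart of the terminal coalgebra, reflecting Adámek's presentation alluded to in the introduction.

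\textbf{Verifying universality.} Given any $F$-Moore machine $(E,d,s)$, I would show that $\bar s_\infty : E \to O_\infty$ is a morphism of $F$-Moore machines and that it is the unique such morphism. Commutativity of the output triangle $s_\infty \circ \bar s_\infty = s$ is immediate, since the zeroth component of $\bar s_\infty$ is exactly $s$. Commutativity of the dynamics square $d_\infty \circ F\bar s_\infty = \bar s_\infty \circ d$ I would check componentwise: projecting to the $n$-th coordinate and transposing across the adjunction reduces both sides to the recurrence $s_{n+1} = s_n \circ F^{n}d$ (equivalently $\bar s_{n+1} = R^n(\bar s) \circ \cdots$) that \emph{defines} the skip maps, so the square commutes by construction. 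For uniqueness, I would argue that any morphism $f : E \to O_\infty$ of $F$-Moore machines must, by the output condition and the dynamics condition iterated, have $\pi_n \circ f = \bar s_n$ for every $n$; since a map into a product is determined by its components, $f = \bar s_\infty$.

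\textbf{The main obstacle.} The genuinely delicate step is defining $d_\infty$ and proving the dynamics square, because both require fluently moving back and forth across the composite adjunctions $F^n \dashv R^n$ and keeping the naturality of the mates straight: the index shift that makes $O_\infty$ a "shift space" is exactly what encodes the coalgebraic fixpoint, and it is easy to be off by one. The Mealy case is then a cosmetic variation: there is no standalone output map attached to the state (the output only appears through $s_1 = s$), so the product starts at $n=1$ and $s_\infty$ is read off the $n=1$ factor instead; the same mate-juggling yields the carrier $O_\infty = \prod_{n\ge 1} R^n O$. Throughout I would lean on the fact, recorded in \autoref{nth_skip}, that the family of skip maps is equivalent to a single map into the product, which is what lets the verification collapse to the defining recurrence.
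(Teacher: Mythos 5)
Your proposal is correct and follows essentially the same route as the paper's own argument: equip $O_\infty$ with the output given by (the mate of) the first projection and with dynamics given by the mate of the shift map that deletes the first coordinate (using $RO_\infty \cong \prod R^{n+1}O$), then show that the assembled skip map $\bar s_\infty : E \to O_\infty$ is a machine morphism via the defining recurrence of the $s_n$, and that it is unique by joint injectivity of the projections. The only cosmetic differences are that you work out the Moore case in detail and treat Mealy as the index-shifted variant, whereas the paper does the reverse, and the paper additionally records the coalgebraic reading (via Ad\'amek's theorem, $O_\infty$ is the terminal coalgebra of $A\mapsto O\times RA$, resp.\ $A\mapsto RO\times RA$), which you only allude to.
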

The `modern' way to determine the presence of a terminal object in categories of automata relies on the elegant coalgebraic methods in \cite{jacobs_2016}; the interest in such completeness theorems can be motivated essentially in two ways:
\begin{itemize}
	\item the terminal object $O_\infty$ in a category of machines tends to be `big and complex', as a consequence of the fact that it is often a terminal coalgebra for a suitably defined endofunctor of $\clK$, so Adamek's theorem presents it as inverse limit of an op-chain.
	\item Coalgebra theory allows us to define a \emph{bisimulation} relation between states of different $F$-algebras (or, what is equivalent in our blanket assumptions, $R$-coalgebras), which in the case of standard Mealy/Moore machines (i.e., when $F=\firstblank\otimes I$) recovers the notion of bisimulation expounded in \cite[Ch. 3]{jacobs_2016}.
\end{itemize}
The following universal characterisation of both categories as pullbacks in $\Cat$ allows us to reduce the whole problem of completeness to the computation of a terminal object, and thus prove \autoref{limcolim}.
\begin{proposition}
\label{machines_are_pb}\leavevmode
	\begin{enumtag}{cx}
		\item the category $\xmly$ of $F$-Mealy machines given in \autoref{xmealy} fits in a pullback square
		\[\label{mly_pb}\vxy{
				\xmly \ar[r]^{U'}\ar[d]_{V'} \xpb& (F_{/O})\ar[d]^V \\
				\cate{Alg}(F) \ar[r]_U & \clK
			}\]
		\item the category $\xmly$ of $F$-Moore machines given in \autoref{xmoore} fits in a pullback square
		\[\label{mre_pb}\vxy{
				\xmoo\ar[r]^{U'}\ar[d]_{V'} \xpb & (\clK_{/O})\ar[d]^V \\
				\cate{Alg}(F) \ar[r]_U & \clK
			}\]
	\end{enumtag}
\end{proposition}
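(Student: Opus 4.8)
The plan is to treat both squares at once by exhibiting, in each case, an isomorphism of categories between the top-left corner and the strict pullback of the bottom and right legs computed in $\Cat$, compatible with all four projection functors. Since $\Cat$ is a strict $2$-category in which the ordinary ($1$-categorical) pullback already enjoys the strict $2$-dimensional universal property, checking that $\xmly$ (resp.\ $\xmoo$), equipped with $U'$ and $V'$, \emph{is} this pullback reduces to matching objects and morphisms on the nose.

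First I would spell out the four functors in the Mealy square \eqref{mly_pb}. The functor $U : \cate{Alg}(F)\to\clK$ is the forgetful functor $(E,d)\mapsto E$; the functor $V : (F_{/O})\to\clK$ sends an object $(E,s : FE\to O)$ of the comma category $F\downarrow O$ to its carrier $E$; the functor $V' : \xmly\to\cate{Alg}(F)$ discards the output, $(E,d,s)\mapsto(E,d)$; and $U' : \xmly\to(F_{/O})$ discards the algebra structure, $(E,d,s)\mapsto(E,s)$. Each acts as the identity on the underlying carrier and on an underlying map $f : E\to T$, so $U\circ V'=V\circ U'$ holds strictly.

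Next I would describe the strict pullback $P=\cate{Alg}(F)\times_\clK(F_{/O})$ explicitly: an object is a pair consisting of an $F$-algebra $(E,d : FE\to E)$ and an object $(E',s : FE'\to O)$ of $F\downarrow O$ with $E=E'$ in $\clK$, that is, precisely a triple $(E,\,d : FE\to E,\,s : FE\to O)$; a morphism is a map $f : E\to T$ that is at once a morphism of $F$-algebras and a morphism in $F\downarrow O$, i.e.\ satisfies both $f\circ d=d'\circ Ff$ and $s'\circ Ff=s$. These are exactly the objects and morphisms of $\xmly$ from \autoref{xmealy}, so the comparison functor $\xmly\to P$ induced by the universal property is an isomorphism commuting with the projections; this settles the Mealy case.

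The Moore square \eqref{mre_pb} is structurally identical, the only difference being that the right leg is now the ordinary slice projection $(\clK_{/O})\to\clK$, $(E,s : E\to O)\mapsto E$; the pullback then has objects $(E,\,d : FE\to E,\,s : E\to O)$ and morphisms $f$ satisfying $f\circ d=d'\circ Ff$ together with $s'\circ f=s$, which are exactly the $F$-Moore machines of \autoref{xmoore}. I expect no genuine difficulty; the one point worth care is the use of the \emph{strict} pullback rather than an iso-comma, legitimate precisely because in both \autoref{xmealy} and \autoref{xmoore} the algebra leg and the output leg literally share the carrier $E$, so that a machine records equality, not mere isomorphism, of carriers. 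The only real bookkeeping is to observe that the pair of commuting squares defining a morphism of machines factors exactly as the conjunction of the algebra-morphism condition and the comma/slice-morphism condition, which is immediate by inspection.
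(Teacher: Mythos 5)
Your proof is correct and is exactly the ``straightforward inspection of the definition of both pullbacks'' that the paper itself gives as its (one-line) proof: you identify the objects and morphisms of the strict pullback with triples $(E,d,s)$ and machine morphisms, for both the comma category $(F_{/O})$ and the slice $(\clK_{/O})$. The only difference is that you write out the bookkeeping the paper leaves implicit, including the (correct) remark that strictness is legitimate because a machine shares its carrier on the nose.
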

\begin{proof}
	Straightforward inspection of the definition of both pullbacks.
\end{proof}
As a consequence of this characterization, by applying \cite[V.6, Ex. 3]{McL} we can easily show the following completeness result:
\begin{theorem}[Limits and colimits of $F$-machines]\label{limcolim}\agdasymbol{FMoore/Limits}\leavevmode
	\begin{itemize}
		\item Let $\clK$ be a category admitting colimits of shape $\clJ$; then, $\xmoo$ and $\xmly$ have colimits of shape $\clJ$, and they are computed as in $\clK$;
		\item Equalizers (and more generally, all connected limits) are computed in $\xmoo$ and $\xmly$ as they are computed in $\clK$; if $\clK$ has countable products and pullbacks, $\xmoo$ and $\xmly$ also have products of any finite cardinality (in particular, a terminal object).
	\end{itemize}
\end{theorem}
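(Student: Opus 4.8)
The plan is to derive both bullets from the pullback presentation of \autoref{machines_are_pb} together with the stability of \emph{creation} of (co)limits under pullback of functors in $\Cat$, exactly in the spirit of \cite[V.6, Ex. 3]{McL}: for a strict pullback square in $\Cat$ whose two legs both create (co)limits of shape $\clJ$, and whose common codomain admits such (co)limits, the apex admits them too and both projections create them; in particular they are computed as in the common codomain $\clK$. Thus the whole argument reduces to recording which (co)limits are created by the two legs $U : \cate{Alg}(F)\to\clK$ and $V$ — the domain projection of the slice $\clK_{/O}$ in the Moore case, and of the comma category $(F_{/O})$ in the Mealy case.

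First I would dispatch colimits and connected limits, which is the formal part. The forgetful functor $U : \cate{Alg}(F)\to\clK$ creates all limits for any $F$ (the algebra structure on a limit of algebras is forced by its universal property) and creates every colimit that $F$ preserves; since $F$ is a left adjoint it preserves all colimits, so $U$ creates all of them. On the other leg, the slice projection $\clK_{/O}\to\clK$ creates all colimits and all connected limits, while the comma projection $(F_{/O})\to\clK$ creates all colimits preserved by $F$ and all connected limits — the latter because a \emph{connected} diagram of outputs $s_i : FE_i\to O$ glues to a single map $F(\lim E_i)\to O$, the composites $s_i\circ F\pi_i$ agreeing by connectedness. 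Feeding these facts into the stability lemma yields the first bullet and the connected-limit half of the second: $\xmoo$ and $\xmly$ have all $\clJ$-colimits and all connected limits that $\clK$ has, computed as in $\clK$.

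Products demand a genuinely different argument, and here lies the main obstacle. The leg $V$ fails to create discrete limits — the terminal object of $\clK_{/O}$ is $\id_O$ rather than a lift of the terminal object of $\clK$, and a non-connected family of maps $FE_i\to O$ cannot be glued — so the stability lemma is silent about products, and they genuinely have a different shape. I would instead construct the terminal object by hand, as announced in \autoref{the_terminal}. Under $F\adjunct{\epsilon}{\eta}R$ an $F$-machine transposes into an $R$-coalgebra-with-output, i.e.\@ a coalgebra for $A\mapsto RA\times O$ (Moore) or $A\mapsto RA\times RO$ (Mealy), whose terminal object is the sought terminal machine. Using the countable products of $\clK$ I would set $O_\infty=\prod_{n\ge 0}R^nO$ (resp.\@ $\prod_{n\ge 1}R^nO$), give it the output map obtained by transposing the first projection and the dynamics obtained by the index shift $\prod_{n}R^nO\to\prod_{n}R^{n+1}O\cong RO_\infty$ (using that the right adjoint $R$ preserves products), and then prove universality by identifying the unique mediating map out of an arbitrary machine with the bundled skip map $\bar s_\infty$ of \autoref{nth_skip}. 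Checking that $\bar s_\infty$ is a morphism of machines and the only one is where the real work sits.

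Finally I would bootstrap from the terminal object to all finite products. As $\clK$ has pullbacks, the connected-limit step already supplies pullbacks in $\xmoo$ and $\xmly$ computed as in $\clK$; combined with $O_\infty$, the standard fact that a terminal object together with pullbacks forces all finite products (a binary product being the pullback of the two maps into the terminal object) delivers products of every finite cardinality, and with the equalizers obtained above, all finite limits. The sole non-formal ingredient — and the step I expect to cost the most effort — is the construction and universal verification of $O_\infty$; every other (co)limit falls out uniformly from \autoref{machines_are_pb} and the creation-stability lemma.
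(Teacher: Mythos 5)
Your proposal is correct, and its skeleton is the paper's own: the pullback presentation of \autoref{machines_are_pb} fed into the Mac Lane stability lemma \cite[V.6, Ex. 3]{McL} disposes of colimits and connected limits, and the terminal object is built by hand as $O_\infty=\prod_n R^nO$, with output map the transpose of the first projection, dynamics the transpose of the index-shift projection, and uniqueness of the mediating map witnessed by the bundled skip map $\bar s_\infty$ of \autoref{nth_skip}. (Your phrasing of the stability lemma --- both legs create --- differs from the paper's quoted form --- $U$ creates, $V$ preserves --- but the two are interchangeable here, since creation plus existence of the co/limits in $\clK$ gives preservation.) The one place where you genuinely diverge is finite products: you invoke the formal fact that a terminal object together with pullbacks (the latter already available in $\xmly$ and $\xmoo$ as connected limits computed as in $\clK$) yields all finite products. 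The paper explicitly acknowledges this route (``all discrete limits can be computed when pullbacks and a terminal object have been found'') but declines it, preferring a direct construction: it forms the pullback $P_\infty$ in $\clK$ of the two canonical maps $\bar s_{E,\infty}\colon E\to O_\infty$ and $\bar s_{T,\infty}\colon T\to O_\infty$, equips it with a machine structure by hand, and verifies the universal property using the joint injectivity of the projections $\pi_n : O_\infty\to R^nO$ and an induction on skip maps. Since those two maps are exactly the unique morphisms into the terminal machine, the paper's $P_\infty$ is the literal instantiation of your bootstrap, and the two arguments produce the same object; yours buys brevity by delegating all verification to general principles, while the paper's explicit chase exhibits concretely what a product of machines looks like and extends directly to infinite products via wide pullbacks, as the paper remarks. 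Both arguments rest on the same non-formal ingredient you correctly single out: the terminality of $O_\infty$.
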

\begin{proof}[Proof of \autoref{limcolim}]\label{proof_of_limcolim}
	It is worth unraveling the content of \cite[V.6, Ex. 3]{McL}, from which the claim gets enormously simplified: the theorem asserts that in any strict pullback square of categories
	\[\vxy{
			\clA \ar[r]^{U'}\xpb\ar[d]_{V'}& \clB \ar[d]^V\\
			\clC \ar[r]_U & \clK
		}\]
	if $U$ creates, and $V$ preserves, limits of a given shape $\clJ$, then $U'$ creates limits of shape $\clJ$. Thus, thanks to \autoref{machines_are_pb}, all connected limits (in particular, equalizers) are created in the categories of $F$-Mealy and $F$-Moore machines by the functors $U' : \xmly \to (F_{/O})$ and are thus computed as in $(F_{/O})$, i.e. as in $\clK$; this result is discussed at length in \cite[Ch. 4]{Ehrig} in the case of $(\firstblank\otimes I)$-machines, i.e. classical Mealy machines, to prove the following:
	\begin{itemize}
		\item assuming $\clK$ is cocomplete, all colimits are computed in $\xmly$ as they are computed in the base $\clK$;
		\item assuming $\clK$ has connected limits, they are computed in $\xmly$ as they are computed in the base $\clK$;
	\end{itemize}
	Discrete limits have to be treated with additional care: for classical Moore machines (cf. \autoref{moore_1cells}) the terminal object is the terminal coalgebra of the functor $A\mapsto A^I\times O$ (cf. \cite[2.3.5]{jacobs_2016}): a swift application of Adàmek theorem yields the object $[I^*,O]$; for classical Mealy machines (cf. \autoref{mmach_1cells}) the terminal object is the terminal coalgebra for $A\mapsto [I,O]\times [I,A]$; similarly, Adàmek theorem yields $[I^+,O]$.

	Adàmek theorem then yields the terminal object of $\xmoo$ as the terminal coalgebra for the functor $A\mapsto O\times RA$, which is the $O_{\infty,0}$ of \autoref{the_terminal}, and the terminal object of $\xmly$ as $O_{\infty,1}$ and for $A\mapsto RO\times RA$ (in $\xmly$).
	All discrete limits can be computed when pullbacks and a terminal object have been found, but we prefer to offer a more direct argument to build binary products.

	Recall from \autoref{nth_skip} the definition of dynamics map associated to an $F$-machine $\fke=(E,d,s)$.

	Now, our claim is two-fold:
	\begin{enumtag}{to}
		\item \label{to_1} the object $O_\infty := \prod_{n\ge 1} R^nO$ in $\clK$ carries a canonical structure of an $F$-machine $\fko=(O_\infty,d_\infty,s_\infty)$ such that $\fko$ is terminal in $\xmly$;
		\item \label{to_2} given objects $(E,d_E,s_E),(T,d_T,s_T)$ of $\xmly$, the pullback
		\[\label{its_a_pb}\vxy{
			P_\infty\ar[r]\ar[d]\xpb & T \ar[d]^{\bar s_{T,\infty}}\\
			E \ar[r]_{\bar s_{E,\infty}} & O_\infty
			}\]
		is the carrier of a $F$-machine structure that exhibits $\fkp=(P_\infty, d_P,s_P)$ as the product of $\fke=(E,d_E,s_E),\fkf=(T,d_T,s_T)$ in $\xmly$.
	\end{enumtag}
	In this way, the category $\xmly$ comes equipped with all finite products; is easy to prove a similar statement when an infinite number of objects $(\fke_i\mid i\in I)$ is given by using wide pullbacks whenever they exist in the base category.

	Observe that the object $P_\infty$ can be equivalently characterized as the single wide pullback obtained from the pullback $P_n$ of $\bar s_{E,n}$ and $\bar s_{T,n}$ (or rather, an intersection, since each $P_n\to E\times T$ obtained from the same pullback is a monomorphism):
	\[\vxy{
		P_\infty\ar[r]\ar[d] \ar@{}[dr]|{\rotatebox[origin=c]{90}{$\ddots$}} & P_n \ar[d]\\
		P_m \ar[r]& E\times T
		}\label{another_pinfty}\]

	Showing the universal property of $P_\infty$ will be more convenient at different times in one or the other definition.

	In order to show our first claim in \ref{to_1}, we have to provide the $F$-machine structure on $O_\infty$, exhibiting a span
	\[\xmach{O_\infty}{s_\infty}{d_\infty}\]
	on one side, $s_\infty$ is the adjoint map of the projection $\pi_1 : O_\infty\to RO$ on the first factor; the other leg $d_\infty$ is the adjoint map of the projection deleting the first factor, thanks to the identification $RO_\infty\cong \prod_{n\ge 2}R^nO$; explicitly then, we are considering the following diagram:
	\[\vxy{
		O_\infty &\ar[l]_{\epsilon_{O_\infty}}FR O_\infty & \ar[l]_{F\pi_{\ge 2}}F O_\infty \ar[r]^{F\pi_1}& FR O \ar[r]^-{\epsilon_O} & O
		}\]
	To prove the first claim, let's consider a generic object $(E,d,s)$ of $\xmly$, i.e. a span
	\[\xmach{E}{s}{d}\]
	and let's build a commutative diagram
	\[\vxy{
		E \ar[d]_u&& FE\ar[rr]^s\ar[d]^{Fu}\ar[ll]_d  && O\ar@{=}[d]\\
		O_\infty &\ar[l]_{\epsilon_{O_\infty}}FR O_\infty & \ar[l]_{F\pi_{\ge 2}}F O_\infty \ar[r]^{F\pi_1}& FR O \ar[r]^-{\epsilon_O} & O
		}\label{terminality_of_O}\]
	for a unique morphism $u : E\to O_\infty=\prod_{n\ge 1} R^nO$ that we take exactly equal to $\bar s_\infty$. The argument that $u$ makes diagram \eqref{terminality_of_O} commutative, and that it is unique with this property, is now a completely straightforward diagram chasing.

	Now let's turn to the proof that the tip of the pullback in \eqref{its_a_pb} exhibits the product of $(E,d_E,s_E),(T,d_T,s_T)$ in $\xmly$; first, we build the structure morphisms $s_P,d_P$ as follows:
	\begin{itemize}
		\item $d_P$ is the dotted map obtained thanks to the universal property of $P_\infty$ from the commutative diagram
		      \[\vxy[@C=4mm@R=4mm]{
				      FP_\infty\ar@{.>}[dr]^{d_P} \ar@{->}[rr] \ar@{->}[dd] &  & FE \ar@{->}[dd]|\hole \ar@{->}[rd] &  \\
				      & P_\infty \ar@{->}[rr] \ar@{->}[dd] &  & E \ar@{->}[dd] \\
				      FT \ar@{->}[rr]|\hole \ar@{->}[rd] &  & FO_\infty \ar@{->}[rd] &  \\
				      & T \ar@{->}[rr] &  & O_\infty
			      }\]
		\item $s_P : FP_\infty\to O$ is obtained as the adjoint map of the diagonal map $P_\infty\to O_\infty$ in \eqref{its_a_pb} composed with the projection $\pi_1 : O_\infty \to RO$.
	\end{itemize}
	Let's now assess the universal property of the object
	\[\xmach{P_\infty}{s_P}{d_P}\]
	We are given an object $\fkz =(Z,d_Z,s_Z)$ of $\xmly$ and a diagram
	\[
		\vxy{
			O \ar@{=}[r] & O & O \ar@{=}[l] \\
			FE \ar@{->}[d]_{d_E} \ar@{->}[u]^{s_E} & FZ \ar@{->}[u]^{s_Z} \ar@{->}[d]^{d_Z} \ar@{->}[r]_{Fv} \ar@{->}[l]^{Fu} & FT \ar@{->}[u]_{s_T} \ar@{->}[d]^{d_T} \\
			E & Z \ar@{->}[r]_{v} \ar@{->}[l]^{u} & T
		}
	\]
	commutative in all its parts. To show that there exists a unique arrow $[u,v] : Z\to P_\infty$
	\[\xymatrix{
		& Z \ar@{.>}[d]|{[u,v]} \ar@{->}[ld]_{u} \ar@{->}[rd]^{v} &  \\
		E & P_\infty \ar@{->}[l]^{p_E} \ar@{->}[r]_{p_T} & T
		}
	\]
	we can argue as follows, using the joint injectivity of the projection maps $\pi_n : O_\infty\to R^nO$: first, we show that each square
	\[\vxy{
		Z \ar@{->}[r]^{u} \ar@{->}[d]_{v} & E \ar@{->}[d]^{{\bar s_{E,n}}} \\
		T \ar@{->}[r]_{{\bar s_{T,n}}} & R^nO
		}\]
	is commutative, and in particular that its diagonal is equal to the $n$th skip map of $Z$; this can be done by induction, showing that the composition of both edges of the square with the canonical projection $O_\infty \to R^nO$ equals $\bar s_{n,Z}$ for all $n\ge 1$. From this, we deduce that there exist maps
	\[\vxy{Z \ar[r]^{z_n} & P_n \ar[r]& E\times T}\]
	(cf. \eqref{another_pinfty} for the definition of $P_n$) for every $n\ge 1$,
	But now, the very way in which the $z_n$s are defined yields that each such map coincides with $\langle u,v\rangle : Z\to E\times T$, thus $Z$ must factor through $P_\infty$. Now we have to exhibit the commutativity of diagrams
	\[\vxy{
		Z \ar@{->}[d]_{{[u,v]}} & FZ \ar@{->}[d]^{{F[u,v]}} \ar@{->}[l]_{d_Z} \ar@{->}[r]^{s_Z} & O \ar@{=}[d] \\
		P_\infty & FP_\infty \ar@{->}[l]^{d_P} \ar@{->}[r]_{s_P} & O
		}\]
	and this follows from a straightforward diagram chasing.

	This concludes the proof.
\end{proof}
\begin{remark}
	Phrased out explicitly, the statement that $\fko=(O_\infty,d_\infty,s_\infty)$ is a terminal object amounts to the fact that given any other $F$-Mealy machine $\fke=(E,d,s)$, there is a unique $u_E : E\to O_\infty$ with the property that
	\[\vxy{
			E \ar[d]_{u_E}& FE\ar[r]^s\ar[d]^{Fu_E}\ar[l]_d  & O\ar@{=}[d]\\
			O_\infty & \ar[l]^{d_\infty} F O_\infty \ar[r]_{s_\infty} & O
		}\]
	are both commutative diagrams; a similar statement holds for $F$-Moore automata.
\end{remark}
\subsection{Adjoints to behaviour functors}\label{roba_naude}
In \cite{2d281e1e3e0b525128f55519cf8a03fa52ce6252,NAUDE1979277} the author concentrates on building an adjunction between a category of machines and a category collecting the \emph{behaviours} of said machines.


Call an endofunctor $F:\clK\to\clK$ an \emph{input process} if the forgetful functor $U : \Alg(F)\to\clK$ has a left adjoint $G$; in simple terms, an input process allows to define free $F$-algebras.\footnote{Obviously, this is in stark difference with the requirement that $F$ has an adjoint, and the two requests are independent: if $F$ is a monad, it is always an input process, regardless of $F$ admitting an adjoint on either side.}

In \cite{2d281e1e3e0b525128f55519cf8a03fa52ce6252,NAUDE1979277} the author concentrates on proving the existence of an adjunction
\[\vxy{L : \cate{Beh}(F) \ar@<.33pc>[r]\ar@{}[r]|\perp&\ar@<.33pc>[l] \cate{Mach}(F) : E}\]
where $\cate{Mach}(F)$ is the category obtained from the pullback
\[\label{naude_diag}\vxy{
		\cate{Mach}(F) \xpb \ar[rr]\ar[d]& & \clK^\to\times\clK^\to \ar[d]^{d_1\times d_0}\\
		\cate{Alg}(F) \ar[r]_{U}& \clK \ar[r]_\Delta & \clK\times\clK
	}\]
$\Delta$ is the diagonal functor, and $\cate{Beh}(F)$ is a certain comma category on the free $F$-algebra functor $G$.

Phrased in this way, the statement is conceptual enough to carry over to $F$-Mealy and $F$-Moore machines (and by extension, to all settings where a category of automata can be presented through a strict 2-pullback in $\Cat$ of well-behaved functors --a situation that given \eqref{mly_pb}, \eqref{mre_pb}, \eqref{naude_diag} arises quite frequently).
\begin{theorem}\label{naude_thm}\agdasymbol{FMoore/Adjoints} 
	There exists a functor $B : \xmoo \to \cate{Alg}(F)_{/(O_\infty,d_\infty)}$, where the codomain is the slice category of $F$-algebras and the $F$-algebra $(O_\infty,d_\infty)$ is determined in \autoref{the_terminal}. The functor $B$ has a left adjoint $L$.
\end{theorem}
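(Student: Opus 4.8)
The plan is to recognise $B$ as the \emph{behaviour functor}, sending each machine to its unique comparison with the terminal machine $\fko=(O_\infty,d_\infty,s_\infty)$ of \autoref{the_terminal}, and $L$ as the \emph{realization functor} reconstructing a machine from a prescribed behaviour; I will then verify the adjunction $L\dashv B$ directly on hom-sets. To define $B$, recall that $\fko$ is terminal in $\xmoo$ (\autoref{limcolim}), so every $F$-Moore machine $(E,d,s)$ carries a unique machine morphism $u_E:(E,d,s)\to\fko$; unravelled, $u_E:E\to O_\infty$ is an $F$-algebra map $(E,d)\to(O_\infty,d_\infty)$ with $s_\infty\circ u_E=s$. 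I set
\[B(E,d,s):=\big(u_E:(E,d)\to(O_\infty,d_\infty)\big),\]
an object of $\cate{Alg}(F)_{/(O_\infty,d_\infty)}$, and $Bf:=f$ on a machine morphism $f$. The slice triangle $u_T\circ f=u_E$ required for $Bf$ to live in the slice holds because $u_T\circ f$ and $u_E$ are both machine morphisms $(E,d,s)\to\fko$, hence equal by terminality; functoriality is then immediate.

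Next I would define $L:\cate{Alg}(F)_{/(O_\infty,d_\infty)}\to\xmoo$ by reading the output off the terminal machine,
\[L\big(h:(A,a)\to(O_\infty,d_\infty)\big):=(A,\,a,\,s_\infty\circ h),\]
again acting as the identity on underlying arrows. A slice morphism $k$ satisfies $h'\circ k=h$, so the output square $s_\infty h'\circ k=s_\infty h$ commutes automatically, making $L$ functorial.

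The heart of the proof is the bijection $\xmoo(LX,M)\cong\cate{Alg}(F)_{/(O_\infty,d_\infty)}(X,BM)$ for $X=(h:(A,a)\to(O_\infty,d_\infty))$ and $M=(E,d,s)$. Both sides are sets of $F$-algebra maps $g:(A,a)\to(E,d)$: on the left subject to $s\circ g=s_\infty\circ h$, on the right subject to $u_E\circ g=h$. The forward implication is formal, since $u_E\circ g=h$ gives $s\circ g=s_\infty\circ u_E\circ g=s_\infty\circ h$. The converse is where the universal property does the work: assuming $s\circ g=s_\infty\circ h$, both $u_E\circ g$ and $h$ are $F$-algebra maps into $(O_\infty,d_\infty)$ that are moreover machine morphisms $(A,a,s_\infty\circ h)\to\fko$, so terminality forces $u_E\circ g=h$. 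As the bijection is the identity on $g$, naturality in both variables is automatic and $L\dashv B$ follows.

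I expect the single genuine subtlety to be this converse step, where output-compatibility of $g$ must be upgraded to the slice identity purely through the uniqueness clause of terminality of $\fko$; everything else is forced by the universal property. The same computation in fact yields $B\circ L=\id$ and $L\circ B=\id$, so $B$ is an isomorphism of categories and the asserted left adjoint is simply its inverse; the variant for $\xmly$ (with carrier $O_\infty=\prod_{n\ge 1}R^nO$) is entirely analogous.
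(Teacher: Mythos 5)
Correct, and essentially the paper's own argument: you define $B$ by terminality of $\fko$ and $L$ by $h\mapsto(A,a,s_\infty\circ h)$, and the adjunction bijection is the identity on underlying arrows, with the slice condition $u_E\circ g=h$ and the output condition $s\circ g=s_\infty\circ h$ traded against each other through the uniqueness clause of terminality --- precisely the correspondence the paper declares ``clear'' without spelling out. Your closing observation that $B\circ L=\id$ and $L\circ B=\id$, so that the adjunction is in fact an isomorphism of categories, is also correct (it uses only that $s_\infty\circ u_E=s$ and that any algebra map $h$ into $(O_\infty,d_\infty)$ is itself the unique machine morphism out of $(A,a,s_\infty\circ h)$), and it sharpens the statement beyond what the paper claims.
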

\begin{proof}\label{proof_of_naude_thm}
	Recall that the functor $B :\xmoo_O\to \cate{Alg}(F)_{/O_\infty}$ is defined as follows on objects and morphisms:
	\[\vxy[@R=5mm]{
			(E,d,s) \ar@{->}[dd]^{f} &  & E \ar@{->}[rd]^{u_E} \ar@{->}[dd]_{f} &  \\
			& \mapsto &  & O_\infty \\
			(T,d',s') &  & T \ar@{->}[ru]_{u_T} &
		}\]
	A typical object of $\cate{Alg}(F)_{/O_\infty}$ is a tuple $((A,a),u)$ where $a : FA\to A$ is an $F$-algebra with its structure map, and $u : A\to O_\infty$ is an $F$-algebra homomorphism, i.e. a morphism $u$ such that $d_T\circ Fu = u\circ a$.

	A putative left adjoint for $B$ realises a natural bijection
	\[\xmoo_O\big(L((A,a), u), (E,d,s)\big)\cong \cate{Alg}(F)_{/O_\infty}\big(((A,a),u),B(E,d,s)\big)\]
	between the following two kinds of commutative diagrams:
	\[\begin{tikzpicture}[baseline=(current bounding box.center)]
			\node[font=\tiny] (dis) at (0,0) {\uno};
			\node[font=\tiny] (dat) at (7,0) {\tre};
			\path[->] (dis) edge[shift left=2mm] (dat);
			\path[<-] (dis) edge[shift left=-2mm] (dat);
		\end{tikzpicture}\]
	There is a clear way to establish this correspondence.
\end{proof}
The functor $B$ is defined as follows:
\begin{itemize}
	\item on objects $\fke=(E,d,s)$ in $\xmoo$, as the correspondence sending $\fke$ to the unique map $u_E : E\to O_\infty$, which is an $F$-algebra homomorphism by the construction in \eqref{terminality_of_O};
	\item on morphisms, $f : (E,d,s)\to (F,d',s')$ between $F$-Moore machines, $B$ acts as the identity, ultimately as a consequence of the fact that the terminality of $O_\infty$ yields at once that $u_F\circ f=u_E$.
\end{itemize}
\begin{remark}\agdasymbol{FMoore/Adjoints.agda\#L194}
	The adjunction in \autoref{naude_thm} is actually part of a longer chain of adjoints obtained as follows: recall that every adjunction $G : \clK \leftrightarrows \clH :  U$ induces a `local' adjunction $\tilde G : \clK_{/UA} \leftrightarrows \clH_{/A} : \tilde U$ where $\tilde U(FA,f : FA\to A) = Uf$. Then, if $F$ is an input process, we get adjunctions
	\[\xymatrix{		\clK_{/O_\infty} \ar@<.33pc>[r]^-{\tilde G}\ar@{}[r]|-\perp&\ar@<.33pc>[l]^-{\tilde U}
		\cate{Alg}(F)_{/(O_\infty,d_\infty)} \ar@<.33pc>[r]^-{L}\ar@{}[r]|-\perp&\ar@<.33pc>[l]^-B
		\xmoo.}\]
\end{remark}

\bibliography{refs}
\end{document}